\newcommand{\E}{\mathbb{E}}
\newcommand{\Prob}{\mathbb{P}}
\newcommand{\Ind}{\mathbf{I}}
\newcommand{\Fb}{\bar{F}}
\newcommand{\fb}{\bar{f}}
\newcommand{\Gb}{\bar{G}}
\newcommand{\gb}{\bar{g}}
\newcommand{\I}{\mathcal{I}}
\newcommand{\D}{\mathcal{D}}
\newtheorem{thm}{Theorem}[section]
\begin{document}

\title{A stochastic analysis of greedy routing in a spatially dependent sensor network}

\author{Holger P. Keeler}

\maketitle

\section*{Abstract}
For a sensor network, a tractable spatially dependent node deployment model is presented with the property that the density is inversely proportional to the sink distance. A stochastic model is formulated to examine message advancements under greedy routing in such a sensor network. The aim of this work is to demonstrate that an inhomogeneous Poisson process can be used to model a sensor network with spatially dependent node density. Symmetric elliptic integrals and asymptotic approximations are used to describe the random behaviour of hops. Types of dependence that affect hop advancements are examined. We observe that the dependence between successive jumps in a multihop path is captured by including only the previous forwarding node location. We include a simple uncoordinated sleep scheme, and observe that the complexity of the model is reduced when sufficiently many nodes are asleep. All expressions involving multidimensional integrals are derived and evaluated with quasi-Monte Carlo integration methods based on Halton sequences and recently developed lattice rules. An importance sampling function is derived to speed up the quasi-Monte Carlo methods. The ensuing results agree extremely well with simulations.

\section{Introduction}
Advancements in networking technologies are leading to sensor networks being a feasible and common technology. Sensor networks consist of electronic sensing devices known as \emph{sensor nodes}. The nodes are deployed over a region known as a sensor field to gather environmental information. Each node has the ability to collect and process environmental data within its sensing range, and to communicate with other nodes within its transmission range. The collected data is ultimately relayed, often via surrounding nodes, to a main station known as a \emph{sink}. The applications of sensor networks are valuable and diverse, and they include security and traffic surveillance, environmental and animal monitoring, natural disaster warning and analysis, and building and structure assessment \cite{AKYILDIZ:2002,CHONG:2003,TUBAISHAT:2003}. 

A significant issue for sensor networks is developing routing methods that can handle their dynamic topologies. A common approach in sensor networks and ad hoc networks in general is geometric or position-based routing \cite{MAUVE:2001,STOJ:2002}. The operation of these algorithms is based on the assumption that each node knows its geographical location in relation to the sink, and the location of neighbouring nodes within its transmission radius. Geometric routing is often considered attractive because of its localized nature and scalability. We refer to the node from where a data message originates as the \emph{source} node. A natural geometric routing approach is for this source node to forward a message to the node that is within the source node's transmission range and geographically the closest to the sink, and to repeat this step until the message finally reaches the target sink. This approach serves as a \emph{greedy} routing method in itself or forms the basis for more intelligent routing methods in wireless ad hoc networks \cite{STOJ:1999,KUHN:2003,KARP:2000,ZORZI1:2003}.

Sensor nodes are often randomly scattered over the sensor field. To conserve power-consumption, a subset of sensor nodes may randomly fall into a low energy-consuming \emph{sleep} state, in which they cannot relay messages. The inherent randomness in sensor networks motivates the need for a suitable stochastic model to determine the ability of a routing scheme to successfully deliver data. In recent years, stochastic models and methods are being increasingly employed in analyzing communication networks. In particular, there is a special issue \cite{HABDF:2009} on stochastic geometry and related fields applied to communication networks, as well as a two-volume monograph by Baccelli and Blaszczyszyn \cite{BB1:2009,BB2:2009}.

The majority of this work, however, is under the assumption that sensor nodes are deployed according to a homogeneous Poisson process. Although such a tractable model might not capture the underlying node density variation, it can serve as a first approximation for studying network characteristics. We wish to extend the standard model by examining inhomogeneous node deployment such that the node density is spatially dependent.

There are various reasons why inhomogeneous deployment models may be necessary. The deployment of sensor nodes depends on the environment and application of the sensor network. Consequently, sensor nodes may need to be deployed more densely in important sensing regions. The obstacles in the network surroundings may prevent nodes from being positioned in certain regions. Inconsistent system parameters (such as battery lifetime) and interference can reduce the effective node density in certain regions. Furthermore, the nature of the actual node deployment influences the node density. For example, an aerial dispersal of nodes could result in the nodes obeying some type of diffusion process. Also, there may be network protocols that require positioning the nodes in certain regions, which lead to performance advantages.

The design and deployment of sensor networks must address the problem of message collisions. The data messages in sensor networks converge towards the sink. Nodes closer to the sink need act as relay nodes more than nodes away from the sink. One possible solution to this problem is to deploy more nodes in these heavy traffic regions. Consequently, the node density would decay at some rate that is dependent on the distance to the sink.

It is in this last setting that we wish to examine greedy routing. We propose an approach using a tractable mathematical model similar to the one developed in previous work \cite{KEELER1,KEELER2}. Ideally, we want to offer a computationally quick and reliable way to obtain probabilistic descriptions of multihop paths in a sensor network with a simple stochastic sleep scheme. Moreover, we want to extend the model, analysis, and calculations methods from the homogeneous case to the inhomogeneous case, and demonstrate that the techniques are still valid.

To achieve these goals, we analyze greedy routing in randomly deployed networks under the multihop situation. We propose a tractable spatially dependent node density function, and analyze the resulting stochastic characteristics. Furthermore, we examine the influence of a simple stochastic sleep scheme. More specifically, we examine the effects of having a certain proportion of nodes awake at any given time. We derive probability distributions that involve multiple integrals, and demonstrate their feasible evaluation via quasi-Monte Carlo integration methods based on Halton sequences and recently developed lattice rules.

The work presented here is focused on the stochastic behaviour of multihop paths, the calculation methods, and mathematically representing the effects of a sleep scheme. Overall, we show the application of this mathematical formulation in describing the stochastic behaviour of message delivery in sensor networks with inhomogeneous node deployment. Additionally, we demonstrate a calculation procedure based on quasi-Monte Carlo methods for evaluating multidimensional integrals.

\section{Background work}
The results presented here follow on from initial homogeneous Poisson model development and analysis \cite{KEELER1}, which was later extended by evaluating resulting integrals and analyzing a simple sleep scheme \cite{KEELER2}. Consequently, the majority of the formulation and solution techniques used here have been applied in the constant density setting.

Ishizuka and Aida \cite{IA:2004} performed analysis on node placement approaches via simulations to gauge the fault tolerance of networks against random node failure and battery exhaustion. In particular, they assumed that individual nodes were scattered uniformly around the sink, and that the density decreased as the distance to the sink increased. In addition to the homogeneous model, Ishizuka and Aida examined two models where in one the density is inversely proportional to the sink distance, and in the other the density is a Gaussian function (its standard deviation was chosen such that ninety-nine percent of nodes were found in the test region). They concluded that the simple inverse function resulted in the best fault tolerance overall. Ishizuka and Aida \cite{IA:2007} later examined a more general power-law model, and concluded again that the sensor networks are the most resilient to node failure when the density inversely proportional to sink distance.

A concept closely related to connectivity is the sensing coverage of a sensor network, which is the ability of a sensor network to successfully sense or cover the entire sensor field. Solutions to coverage problems have been based on coverage processes such as the Boolean model; see Hall \cite{HALL:1988} and Stoyan, Kendall and Mecke \cite{SKM:1995} for more details. In the sensor network setting, a more recent example is that of Pallavi et al. \cite{MRM:2009} who used coverage processes to examine the coverage of a sensor network with an exponentially decreasing node density.

The aforementioned work involving inhomogeneous node deployment cases did not examine the stochastic behaviour of any particular routing method. Furthermore, the work did not cover the effects that a sleep scheme has on stochastic dependencies in the routing model.

\section{Mathematical model}
We present a two-dimensional model that neglects the earth's curvature. We assume that nodes communicate data radially, and that a node's transmission radius is a constant that clearly cuts off at some distance, which implies that a node can relay data only to another node when it is within the forwarding node's transmission radius. For numerical calculations and simulations, the transmission radius is set to one. However, we denote the transmission radius $r$ in ensuing calculations and equations for clarity and future extensions.

We assume that nodes are scattered according to a two-dimensional Poisson process over a finite region, and that at any given time a random number of nodes are in sleep mode while the remaining are in their awake mode. Furthermore, we assume the awake node density, or the average number of awake nodes per unit area, is a spatially dependent function; that is, the nodes are scattered according to an inhomogeneous Poisson process \cite{SKM:1995}. We assume the nodes are scattered uniformly around the sink, but decreases in some way as the distance to the sink increases. Hence, the awake node density is a radial function of the form
\begin{equation}\label{inhomlambda0}
\lambda(u)=\lambda q(u)\qquad u\in(0,\infty),
\end{equation}
where $u$ is the distance to the sink, $q(u)$ is a non-negative shaping function, and $\lambda$ is a postive constant. The function $\lambda(u)$ can be interpreted as the mean number of awake nodes per infinitesimal area element. We refer to the constant $\lambda$ as the \emph{initial} node density, and use it to scale the density function.

Ideally, the $q(u)$ function should be amenable to analytical and asymptotic methods while reflecting a realistic node placement. We study a node density that is inversely proportional to the sink distance
\begin{equation}\label{inhomlambda}
q(u)=\frac{1}{u},
\end{equation}
which is the model proposed by Ishizuka and Aida \cite{IA:2004}. Their simulation work showed that overall this node deployment model outperformed a Gaussian model in both tolerance against battery exhaustion and random node failure. We also believe that this positioning of nodes will better accommodate the convergence of messages near the sink. Consequently, we examine this model due its observed tolerance in simulations, while still being tractable to mathematical methods.

We introduce a Poisson process \emph{mean} measure $\Lambda$, which for a bounded Borel set $B\subset\mathbb{R}^2$ with area $A$ is the density function integrated over the region $B$, thus in polar coordinates
\begin{align}
\Lambda(B)&=\lambda \iint_Bq(u)udud\theta,\\
&= \lambda Q(B),
\end{align}
where $Q$ is referred to as the \emph{rescaled} mean measure, and is used such that the notation is analogous to the results based on the constant density case when $Q$ reduces to the area $A$ of the region \cite{KEELER1,KEELER2}. We emphasize that all the $Q$-type expressions we consider in this work are derived in a similar manner to the corresponding area expressions under the homogeneous model by integrating the density over specific regions \cite{KEELER1,KEELER2}.

Under a sleep model, the initial node density parameter can be written as $\lambda=p\alpha$ where $p$ is the probability of a node being awake and $\alpha$ is the underlying (that is, sleep and awake) node density parameter. The awake nodes form a thinned Poisson process. As in the homogeneous case, both the number of points kept and the number of points removed form random variables that are independent of each other \cite{SKM:1995}. Hence, in our model the number of awake nodes is independent of the number of asleep nodes, which examined further in Section \ref{sleep5}.

The number of awake nodes $N_B$ located within some region $B$ is a inhomogeneous Poisson random variable with a probability mass function
\begin{equation}\label{inhomPoisson1}
\mathbb{P}(N_B=n)=\frac{\left(\lambda Q(B)\right)^n}{n!}e^{-\lambda Q(B)}.
\end{equation}

\begin{figure}
\begin{center}
\begin{overpic}[scale=0.60]{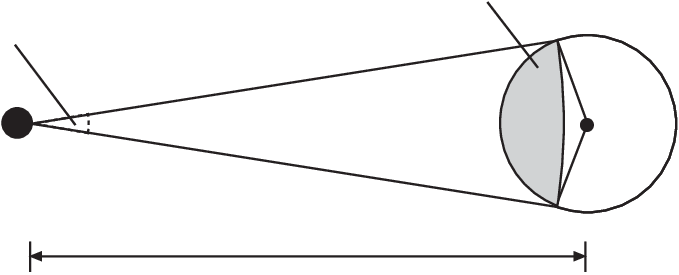}
\put(59,40){$\I_{\gamma}(u)$}
\put(48,31){$u$}
\put(87,30){$r$}
\put(43,5){$\gamma$}
\put(-02,36){$2\psi_{\gamma}(u)$}
\end{overpic}
\caption{Integrate over the region $\I_{\gamma}(u)$ to derive $\Lambda(\I_{\gamma}(u))$ for the forwarding node.\label{overlapcircle5}}
\end{center}
\end{figure}

\section{Single hop analysis}
We introduce the parameter $\gamma$ to represent the distance between a forwarding node and the sink (respectively located on the right and the left side in Fig. \ref{overlapcircle5}). Often we shall present results such that the initial node density parameter $\lambda$ is a product of the source-sink distance and some positive constant. For a forwarding node with a sink distance $\gamma$, let $\I_{\gamma}(u)\subset\mathbb{R}^2$ be its partial feasible region as a function of $u$. This region is formed by the intersection of two circles of radii $r$ and $u$ centered at the source node and the sink respectively (as illustrated on the right in Fig. \ref{overlapcircle5}). The area of $\I_{\gamma}(u)$ is denoted by $A_{\gamma}(u)$, and given by the integral
\begin{equation}\label{area5}
A_{\gamma}(u)=2\int_{\gamma-r}^u \int_0^{\psi_\gamma(w)}wd\theta dw,
\end{equation}
where the sink angle function $\psi_{\gamma}(u)$, by the law of cosines, is defined as
\begin{equation}\label{psi5}
  \psi_{\gamma}(u)=\arccos\left(\frac{u^2+\gamma^2-r^2}{2u\gamma}\right).
\end{equation}
Furthermore, the mean measure of the region $\I_{\gamma}(u)$ is given by the integral
\begin{equation}\label{Lambda0}
\Lambda(\I_{\gamma}(u))=2\int_{\gamma-r}^u \int_0^{\psi_\gamma(w)}\lambda(w)wd\theta dw,
\end{equation}
which for the functions (\ref{inhomlambda0}) and (\ref{inhomlambda}) reduces to
\begin{align}\label{Lambda1}
\Lambda(\I_{\gamma}(u))&=2\lambda\int_{\gamma-r}^u \int_0^{\psi_\gamma(w)}d\theta dw\\
&=2\lambda\int_{\gamma-r}^u \psi_\gamma(w)dw.\label{Lambda2}
\end{align}
Henceforth, we write
\[
\Lambda_{\gamma}(u)= \Lambda(\I_{\gamma}(u)), \qquad
Q_{\gamma}(u)= Q(\I_{\gamma}(u)),
\]
to refer to the mean measure and the rescaled mean measure respectively, the motivation of which will become apparent by the analogous results that follow.

To calculate the mean measure $\Lambda_{\gamma}(u)$, we write integral (\ref{Lambda2}) as
\begin{equation}
2 \lambda\int_{\gamma-r}^u\psi_{\gamma}(w)dw=2\lambda u\psi_{\gamma}(u)+2\lambda\int_{\gamma-r}^u\frac{(w^2+r^2-\gamma^2)}{\left([w^2-(\gamma-r)^2][(\gamma+r)^2-w^2]\right)^{1/2}}dw,
\end{equation}
whose solution is obtained with the reduction of general elliptic integrals by symmetric elliptic integrals; see Section 19.29 of \cite{DLMF:2010} for further details and examples. The final solution of (\ref{Lambda2}) is 
\begin{align}
\nonumber\Lambda_{\gamma}(u)= &\phantom{o} 2\lambda [u \psi_{\gamma}(u)- \frac{1}{3}(\gamma-r)^2(\gamma+r)^2R_D(v^2+(\gamma-r)^2,v^2+(\gamma+r)^2, v^2) 
    +u(\gamma-r)/v\\
    &+(r^2-\gamma^2)R_F(v^2+(\gamma-r)^2,v^2+(\gamma+r)^2, v^2)],\qquad u\in(\gamma-r,\gamma]\label{meanU},
\end{align}
where 
\begin{equation}
v=\frac{\gamma-r}{u^2-(\gamma-r)^2}\left([u^2-(\gamma-r)^2][(\gamma+r)^2-u^2)]\right)^{1/2},
\end{equation}
and $R_F$ and $R_D$ are symmetric elliptic integrals in Carlson form
\begin{align}
&R_F(x,y,z)=\frac{1}{2}\int_0^{\infty}\frac{dt}{\left[(t+x)(t+y)(t+z)\right]^{1/2}},
\\
&R_D(x,y,z)=\frac{3}{2}\int_0^{\infty}\frac{dt}{(t+z)\left[(t+x)(t+y)(t+z)\right]^{1/2}}.
\end{align}
For calculation purposes, we note that naturally
\begin{equation}
\lim_{u\rightarrow\gamma-r}\Lambda_{\gamma}(u)=0.
\end{equation}
The results presented here are obtained via our purposely-written elliptic integrals based on the papers by Carlson \cite{CARLSON:1979,CARLSON:1995}, which give algorithms that are readily implementable and can handle both real and complex values under specified variable and parameter regimes. We found that the solutions are calculated quickly owing to the speedy convergence of the Carlson algorithms.

Take as our sample space $\Omega$ the set of two-dimensional point processes on $\mathbb{R}^2$, together with an appropriate $\sigma$-field ${\cal F}$ of subsets of $\Omega$ and the probability measure $\Prob$ induced on this space by our Poisson intensity measure (\ref{meanU}). For a node at distance $\gamma$ from the origin, let the random variable $U$ with resepect to $(\Omega,{\cal F}, \Prob)$ be the sink distance of the forwarding node after a single message hop. To derive the probability distribution of $U$, the nearest neighbour \cite{SKM:1995} approach is used in which $\Prob(U > u$) is equated to the probability of no nodes existing in the feasible region at a distance less or equal to $u$. This argument is analogous to that of the homogeneous case \cite{KEELER1}, hence the distribution
\begin{equation}\label{inhomFu}
F_{\gamma}(u) = \left\{ \begin{array}{ll}
1-e^{-\lambda Q_{\gamma}(u)} & \gamma-r \leq u <\gamma\\
1 & u\geq \gamma\\
0& u<\gamma-r,
\end{array} \right.
\end{equation}
follows. There is a jump discontinuity in the distribution at $u=\gamma$ owing to the positive probability that no nodes lie within the feasible region. On the support where the distribution (\ref{inhomFu}) is absolutely continuous the probability density is defined by
\begin{equation}\label{inhomfu}
f(u)=\lambda Q_{\gamma}'(u)e^{-\lambda Q_{\gamma}(u)},\qquad \gamma-r \leq u < \gamma,
\end{equation}
where the derivative of the rescaled mean measure
\[
Q_{\gamma}'(u)=2\psi_{\gamma}(u).
\]

Let $C=\gamma-U$ be the distance advanced towards the sink when the originating node is at a distance $\gamma$. Let $\Fb$ denote the distribution of $C$, which leads to
\begin{equation}\label{inhomFc}
\Fb_{\gamma}(c) = \left\{ \begin{array}{ll}
e^{-\lambda Q_{\gamma}(\gamma-c)} & 0 < c \leq r\\
1 & c> r\\
0& c\leq 0,
\end{array} \right.
\end{equation}
and its probability density is given by
\begin{equation}\label{inhomfc}
\fb(c)=\lambda Q_{\gamma}'(\gamma-c)e^{-\lambda Q_{\gamma}(\gamma-c)},\qquad 0 < c \leq r.
\end{equation}
 Given that $C$ is non-negative, the $m$-th moment expression
\begin{align}\label{inhomeCm}
\mathbb{E}(C^m) & = m\int_0^r c^{m-1}\Prob(C>c)dc,\\
& =  r^m-m\int_{0}^r c^{m-1} e^{-\lambda Q_{\gamma}(\gamma-c)}dc,
\end{align}
follows.
 
\subsection{Asymptotic results}
We derive an asymptotic approximation to the rescaled mean measure $Q_{\gamma}(u)$, which gives a tractable and accurate expression. The feasible region of the forwarding node approaches zero at the point $u=\gamma-r$, and is the point closest to the sink. The angle function $\psi_{\gamma}$ expanded at this point gives
\begin{equation}\label{approxpsi5}
\psi_{\gamma}(u)\approx b_0(u-\gamma+r)^{1/2}+b_1(u-\gamma+r)^{3/2}+
b_2(u-\gamma+r)^{5/2},
\end{equation}
where the expansion terms
\begin{align}
b_0 &= \left[\frac{2r}{\gamma (\gamma-r)}\right]^{1/2},\\
b_1 &= \left[\frac{2r}{\gamma (\gamma-r)}\right]^{1/2}\left[\frac{r^2-3 r\gamma -3 \gamma^2}{12 (\gamma^2 r- \gamma r^2)}\right],\\
b_2 &= \left[\frac{2r}{\gamma (\gamma-r)}\right]^{1/2}\left[\frac{3 r^4+25 r^2\gamma^2 -10r^3\gamma +30 \gamma^3 r-5 \gamma^4}{160 \gamma^2 (\gamma-r)^2 r^2}\right],
\end{align}
follow \cite{KEELER1}. The asymptotic result for the rescaled mean measure
\begin{equation}\label{approxQ}
Q_{\gamma}(u)\approx 4\left[\frac{b_0}{3}(u-\gamma+r)^{3/2}+\frac{b_1}{5}(u-\gamma+r)^{5/2}+
\frac{b_2}{7}(u-\gamma+r)^{7/2}\right],
\end{equation}
follows.

\begin{figure}
\begin{center}
\includegraphics[scale=0.6]{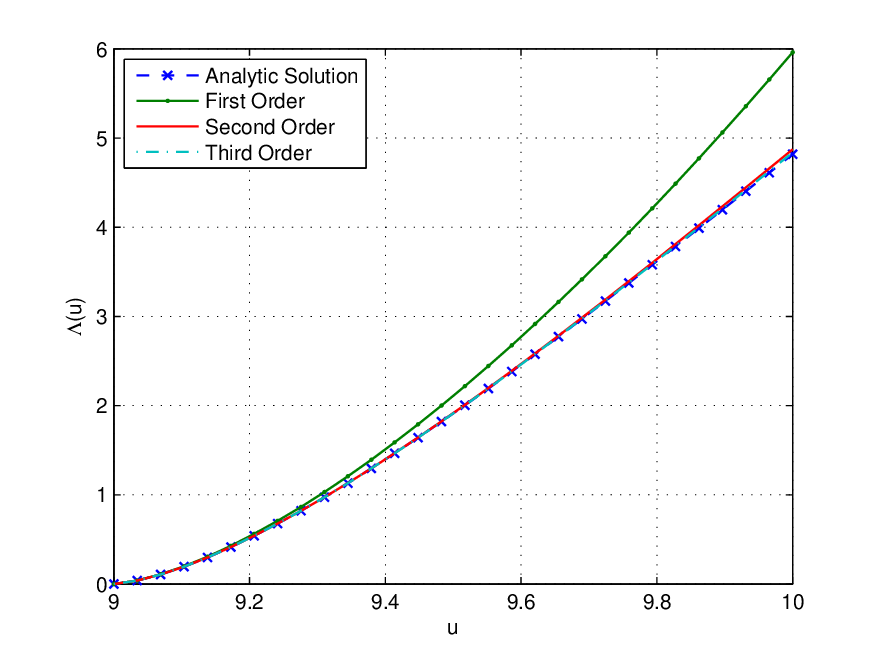}
\caption{Comparison of $\Lambda_{\gamma}(u)$ asymptotic expressions ($\lambda=3\gamma$, $r=1$ and $\gamma=10$) .\label{CompLambdaR1}}
\end{center}
\end{figure}

The results of the second-order approximation are accurate for a unit radius (see Fig. \ref{CompLambdaR1}). Adding the third term only improves the results slightly. However, it may be needed for larger transmission radius models (as Fig. \ref{CompLambdaR5} suggests). Conversely, the two-term expansion gives accurate results (see Fig. \ref{AsymHopDist}) when substituted into the sink distribution (\ref{inhomFu}). In fact, it appears that the expansion (\ref{approxQ}) consisting of elementary functions can be used to give accurate results, which are clearly faster to evaluate than those based on elliptic integrals. However, we continue to use the exact solution of the integral (\ref{Lambda2}), and later compare it to its approximation.

\begin{figure}
\begin{center}
\includegraphics[scale=0.6]{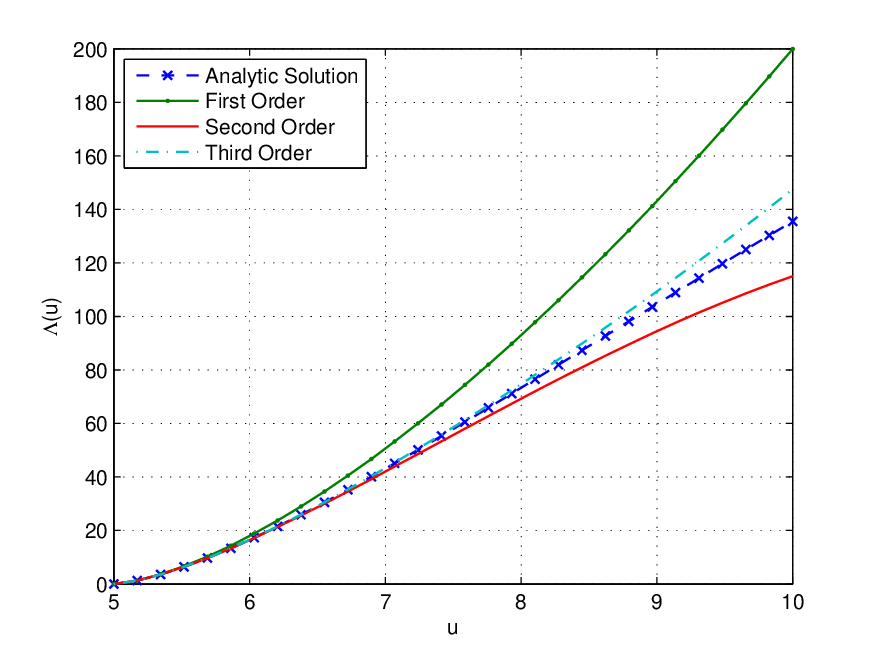}
\caption{Comparison of $\Lambda_{\gamma}(u)$ asymptotic expressions ($\lambda=3\gamma$, $r=5$ and $\gamma=10$) .\label{CompLambdaR5}}
\end{center}
\end{figure}

\begin{figure}
\begin{center}
\includegraphics[scale=0.6]{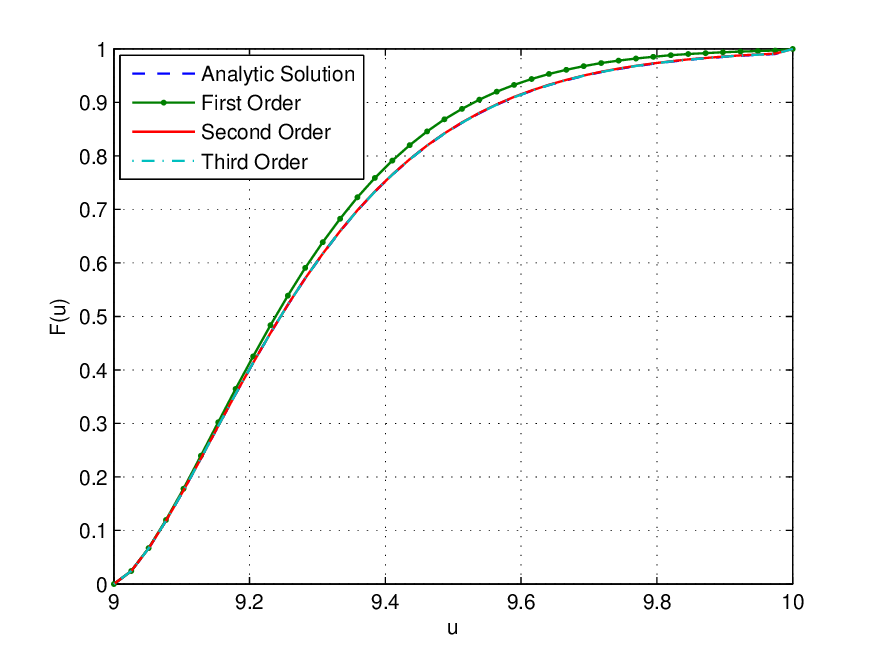}
\caption{Comparison of $F_{\gamma}(u)$ asymptotic expressions ($\lambda=3\gamma$, $r=1$ and $\gamma=10$) .\label{AsymHopDist}}
\end{center}
\end{figure}

\begin{figure}
\begin{center}
\includegraphics[scale=0.6]{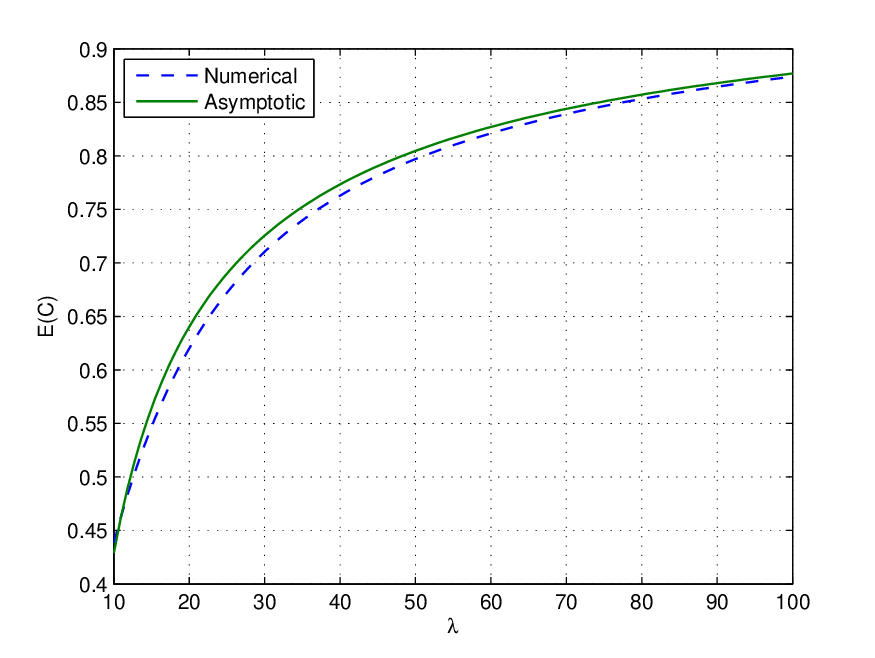}
\caption{Numerical and asymptotic results of first moment $\E(C)$ ($r=1$ and  $\gamma=10$).\label{AsymMomemt1}}
\end{center}
\end{figure}

For large $\lambda$, we present asymptotic moment results for our spatially dependent node density model.
\begin{thm}\label{AsymThm5}
For the mean measure (\ref{meanU}), provided $\gamma>r$, under greedy routing the first hop moment
\begin{equation}\label{aEc5}
\mathbb{E}(C)\sim r-\dfrac{\Gamma(5/3)}{\left(\lambda q_0\right)^{2/3}} ,
\end{equation}
and the second hop moment
\begin{equation}\label{aVc5}
\mathbb{E}(C^2)\sim r^2-2r\dfrac{\Gamma(5/3)}{\left(\lambda q_0\right)^{2/3}}+\dfrac{\Gamma(7/3)}{\left(\lambda q_0\right)^{4/3}} ,
\end{equation}
as the initial node density
\[
\lambda \rightarrow \infty,
\]
where $\Gamma$ is the gamma function, and
\[
q_0=\dfrac{4}{3}\left[\frac{2r}{\gamma (\gamma-r)}\right]^{1/2}.
\]
\end{thm}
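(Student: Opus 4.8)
The plan is to treat the two moment integrals (\ref{inhomeC1}) and (\ref{inhomeC2}) by Laplace's method (equivalently Watson's lemma). The first step is the change of variable $s=r-c$, which rewrites the relevant integrals in terms of $Q_{\gamma}$ evaluated near the lower end of its support:
\[
\int_0^r e^{-\lambda Q_{\gamma}(\gamma-c)}\,dc=\int_0^r e^{-\lambda Q_{\gamma}(\gamma-r+s)}\,ds,
\]
\[
\int_0^r c\,e^{-\lambda Q_{\gamma}(\gamma-c)}\,dc=r\int_0^r e^{-\lambda Q_{\gamma}(\gamma-r+s)}\,ds-\int_0^r s\,e^{-\lambda Q_{\gamma}(\gamma-r+s)}\,ds.
\]
The point of the substitution is that $Q_{\gamma}$ vanishes at the lower endpoint of its support, $Q_{\gamma}(\gamma-r)=0$, and is strictly increasing there (since $Q_{\gamma}'(u)=2\psi_{\gamma}(u)>0$ on $(\gamma-r,\gamma)$), so as $\lambda\to\infty$ the mass of $e^{-\lambda Q_{\gamma}(\gamma-r+s)}$ concentrates at $s=0$; the contribution of any region $s\ge\delta>0$ is $O(e^{-\lambda Q_{\gamma}(\gamma-r+\delta)})$ and hence asymptotically negligible.

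The second step feeds in the local behaviour of $Q_{\gamma}$ near $s=0$. From the expansions (\ref{approxpsi5})--(\ref{approxQ}), and writing $q_0=\tfrac{4}{3}b_0=\tfrac{4}{3}[2r/(\gamma(\gamma-r))]^{1/2}$ (finite and strictly positive precisely because $\gamma>r$), one has $Q_{\gamma}(\gamma-r+s)=q_0 s^{3/2}\bigl(1+O(s)\bigr)$ as $s\to0^{+}$. Substituting $v=Q_{\gamma}(\gamma-r+s)$ turns each integral into a Laplace transform $\int_0^{v_0}e^{-\lambda v}h(v)\,dv$ whose amplitude $h$ has an algebraic singularity at $v=0$: inverting the leading relation gives $s\sim(v/q_0)^{2/3}$ and $ds\sim\tfrac{2}{3}q_0^{-2/3}v^{-1/3}\,dv$, so the amplitude is $\tfrac{2}{3}q_0^{-2/3}v^{-1/3}+\cdots$ for the first integral and $\tfrac{2}{3}q_0^{-4/3}v^{1/3}+\cdots$ for the integral carrying the extra factor $s$. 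Watson's lemma then gives
\[
\int_0^r e^{-\lambda Q_{\gamma}(\gamma-r+s)}\,ds\sim\frac{2}{3q_0^{2/3}}\,\frac{\Gamma(2/3)}{\lambda^{2/3}},\qquad
\int_0^r s\,e^{-\lambda Q_{\gamma}(\gamma-r+s)}\,ds\sim\frac{2}{3q_0^{4/3}}\,\frac{\Gamma(4/3)}{\lambda^{4/3}}.
\]
(The same constants arise more elementarily by replacing $Q_{\gamma}$ with its leading term, extending the upper limit to $\infty$ with exponentially small error, and using $t=\lambda q_0 s^{3/2}$.) Using $\Gamma(5/3)=\tfrac{2}{3}\Gamma(2/3)$ and $\Gamma(7/3)=\tfrac{4}{3}\Gamma(4/3)$ collapses the constants, and substituting back into (\ref{inhomeC1}) and (\ref{inhomeC2}) yields (\ref{aEc5}) and (\ref{aVc5}) respectively.

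The main obstacle is the rigorous justification of the Watson's-lemma step: one must show that the subleading algebraic terms in $h(v)$ (equivalently, the higher-order terms $b_1,b_2,\dots$ in the expansion of $Q_{\gamma}$) enter only at orders $\lambda^{-4/3}$ and smaller in $\mathbb{E}(C)$ and at orders $\lambda^{-2}$ and smaller in $\mathbb{E}(C^2)$, so that they do not perturb the stated leading behaviour, and that the errors from truncating at $s=r$ and from replacing $Q_{\gamma}$ by its algebraic expansion are genuinely exponentially small. This reduces to checking the standard hypotheses of Watson's lemma — monotonicity of $Q_{\gamma}$ on $(\gamma-r,\gamma)$, a uniform remainder bound of the form $Q_{\gamma}(\gamma-r+s)=q_0 s^{3/2}+O(s^{5/2})$, and integrability of the amplitude $h$ — all of which can be read off from the closed elliptic-integral form (\ref{meanU}) together with the expansion (\ref{approxQ}).
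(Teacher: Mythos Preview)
Your proof is correct and follows essentially the same approach as the paper: the same change of variable $s=r-c$ (the paper writes $t=r-c$), the same leading expansion $Q_{\gamma}(\gamma-r+s)\sim q_0 s^{3/2}$, and the same Laplace/Watson asymptotics applied to the integrals $\int_0^r t^k e^{-\lambda Q_{\gamma}(t)}\,dt$ for $k=0,1$. The paper simply quotes the generalized Laplace formula $I(\lambda)\sim\Gamma(\tau)/[\mu(\lambda q_0)^{\tau}]$ with $\mu=3/2$ and $\tau=2(k+1)/3$ from Wong's text, whereas you derive the same constants via the Watson's-lemma substitution and then collapse them using $\Gamma(5/3)=\tfrac{2}{3}\Gamma(2/3)$, $\Gamma(7/3)=\tfrac{4}{3}\Gamma(4/3)$; the content is identical.
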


\begin{proof}
Consider integrals of the form
\begin{equation}
I(\lambda)=\int_a^b \phi(t) e^{-\lambda Q(t)}dt,
\end{equation}
Assume the real function $Q(t)$ has one minimum on the interval $[a,b]$, which occurs at $t=a$, and that
\begin{equation}\label{exp1}
Q(t)\sim Q(a)+\sum_{s=0}^{\infty}q_s(t-a)^{s+\mu},
\end{equation}
and
\begin{equation}\label{exp2}
\phi(t)\sim \sum_{s=0}^{\infty}\phi_s(t-a)^{s+\beta-1},
\end{equation}
as $t\rightarrow a^+$, and $\lambda$ and $\mu$ are positive constants, and the constant $\beta$ can be real or complex provided that the real part is positive; for more details see Laplace's method \cite[page 58]{WONG:1989}. Furthermore, assume that the first expansion (\ref{exp1}) can be differentiated
\begin{equation}\label{exp3}
Q'(t)\sim \sum_{s=0}^{\infty}(s+\mu)q_s(t-a)^{s+\mu-1},
\end{equation}
as $t\rightarrow a^+$. Provided that $Q(t)>Q(a)$ for all $t\in$ $(a,b)$, then Laplace's method can be applied to integrals of the form
\begin{equation}
I(\lambda)=\int_a^b (t-a)^k e^{-\lambda Q(t)}dt,
\end{equation}
where under our setting $k=0$ or $k=1$, and $a=0$, thus giving
\[
I(\lambda)\sim \dfrac{\Gamma(\tau)}{\mu(\lambda q_0)^{\tau}},\quad \lambda \rightarrow \infty,
\]
where
\[
\tau=\dfrac{2(k+1)}{3}.
\]
Since $Q(a)$ needs to be the minimum on the integral interval, we use the change of variable $t=u-\gamma+r=r-c$ in the expansion of $Q(t)$, which, with a slight abuse of notation, leads to
\begin{equation}\label{asymareaT}
\begin{array}{c}
  Q_{\gamma}(t)\sim q_0t^{3/2}+O(t^{5/2})
\quad\textrm{as }\quad t \rightarrow 0,
\end{array}
\end{equation}
and 
\[
q_0=\dfrac{4b_0}{3},\qquad \mu=\dfrac{3}{2}.
\]
The first moment result (\ref{aEc5}) follows by setting $k=0$. The change of variable applied to the second moment equation gives
\[
\mathbb{E}(C^2)=r^2-2\int_{0}^r(r-t)e^{-\lambda Q_{\gamma}(t)}dt,
\]
which leads to the second result (\ref{aVc5}) by setting $k=0$ and $k=1$ accordingly.
\end{proof}

\subsection{Sink dependence}\label{sinkDep}
Since $\gamma$ is the distance of an arbitrary node forwarding a message, we set $\gamma=\ell$ when the forwarding node is the source node. The node intensity function is clearly dependent on the source node sink distance. Comparing the hop distributions of messages from two different sources (in Fig. \ref{CompHopDist5}) reveals that a message is relayed farther in a single hop if the forwarding node is closer to the sink. Intuitively, hops increase stochastically as the message approaches the sink as more potential forwarding nodes are available in the forwarding region. Geometrically, we observe that the integral kernel in the mean measure equation (\ref{Lambda2}) is the angle function $\psi_{\gamma}$, which decreases as $\gamma$ increases; that is
\begin{equation}\label{psiIneq}
\psi_{\gamma_1}(u) \leq \psi_{\gamma_2}(u), \qquad \gamma_1\geq \gamma_2 \geq r.
\end{equation}
Conversely, $\psi_{\gamma}$ and, hence, the integral (\ref{meanU}) increases as the sink distance decreases. This dependence on the sink distance of the forwarding node is simply referred to as the \emph{sink} dependence.

The influence of the sink dependence can be observed by comparing the difference in two hop distributions with different sink distances $\gamma_1$ and $\gamma_2$. In previous work \cite{KEELER1}, the hop distribution dependence on the sink distance under the homogeneous model was examined by a Kullback-Leibler. The Kullback-Leibler divergence \cite{KULLBACK:1959}, also known as relative entropy, is an asymmetric measure of the difference between two probability distributions, and it applied to the mixed discrete-continuous hop distribution gives
\begin{equation}\label{KLInhom}
D(\gamma_1,\gamma_2)=\int_0^r \fb_{\gamma_2}(c)\log\left[\frac{\fb_{\gamma_2}(c)}{\fb_{\gamma_1}(c)} \right]dc+\Fb_{\gamma_2}(0^+)\log\left[\frac{\Fb_{\gamma_2}(0^+)}{\Fb_{\gamma_1}(0^+)} \right],
\end{equation}
where the routing void (that is, no nodes in the feasible region) probability
\begin{equation}
\Fb_{\gamma}(0^+)= e^{-\lambda Q_{\gamma}(\gamma)}.
\end{equation}
The Kullback-Leibler divergence is non-negative and is zero for identical distributions \cite{KULLBACK:1959}.
\begin{figure}
\begin{center}
\includegraphics[scale=0.6]{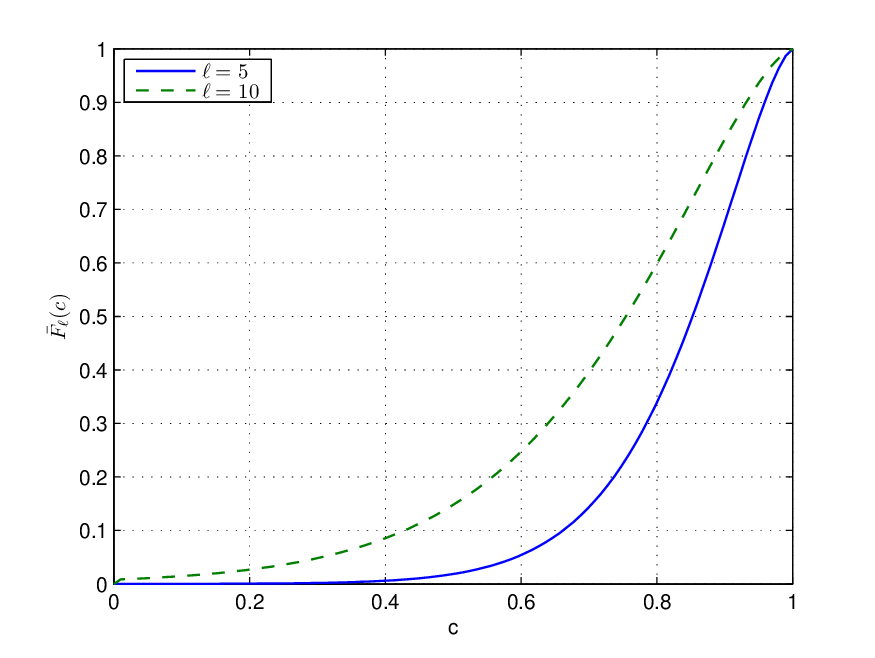}
\caption{Comparison of hop distributions $\Fb_{\ell}(C)$ for different $\ell$ ($\lambda=30$, $r=1$, and $\ell=5$ and $\ell=10$)).\label{CompHopDist5}}
\end{center}
\end{figure}

\begin{figure}
\begin{center}
\includegraphics[scale=0.6]{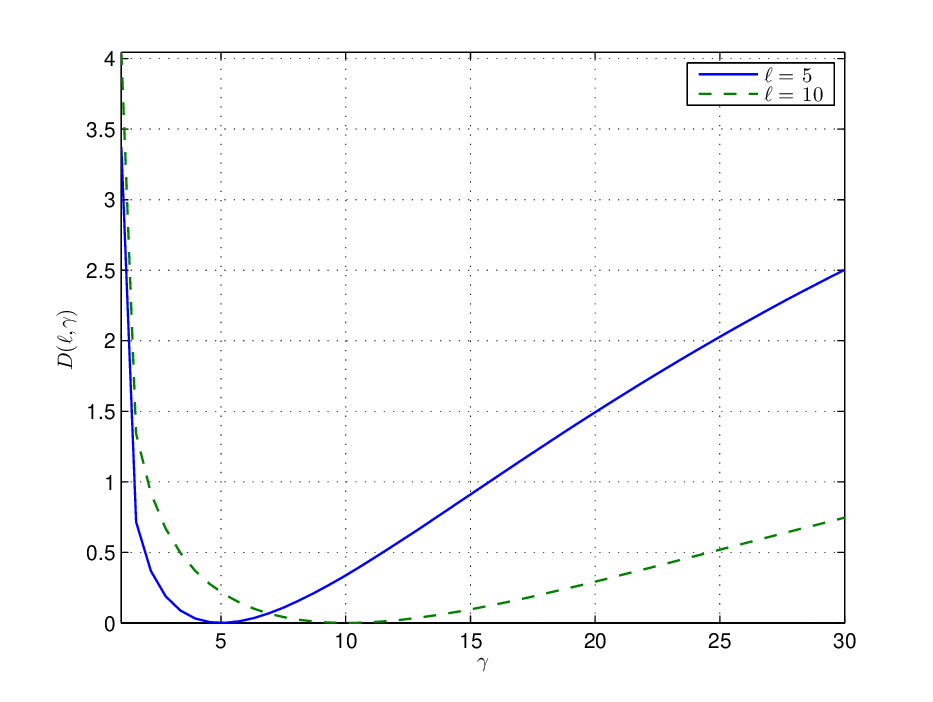}
\caption{Kullback-Leibler analysis of hop distributions for different $\ell$ ($\lambda=30$, $r=1$, and $\ell=5$ and $\ell=10$).\label{kullbackInhom}}
\end{center}
\end{figure}

We calculated the integral in equation (\ref{KLInhom}) numerically to observe how the hop distribution is influenced when we set $\gamma_1=\ell$ and vary $\gamma_2=\gamma$. The comparison reveals that $D(\ell,\gamma)$ is high near the sink and decreases as $\gamma$ increases (refer to Fig \ref{kullbackInhom}). This is the same expected behaviour as observed under the homogeneous model \cite{KEELER1}, however, $D(\ell,\gamma)$ decays relatively slowly under the inhomogeneous model. We observe after $D(\ell,\gamma)=0$ at $\gamma=\ell$, it then increases at a rate that depends on the value of $\ell$. This contrasts starkly with the homogeneous model where $D(\ell,\gamma)$ is mostly zero away from the sink, and only increases significantly near the sink \cite{KEELER1}. Furthermore, $D(\ell,\gamma)$ differs significantly for two different $\ell$; that is, the hop distribution varies with respect to the sink distance. This behaviour differs from the constant node density case where renewal processes can be used to model and bound message advancement over multihop routes due to the hop distribution varying only slightly \cite{ZORZI1:2003,KEELER1}; hence renewal processes cannot be used here. 

\section{Multihop analysis}
For a multihop analysis, we introduce indexing for the random variables $U$ and $C$ by initially setting $U_0=\ell$ and $U_1=U$. For a node at distance $\ell$ from the origin, the random variables $U_i$ that give the sink distance of the forwarding node after $i$ hops are defined with respect to $(\Omega,{\cal F}, \Prob)$. We define the $i$-th hop advancement as $C_i=U_{i-1}-U_i$. Each $C_i$ depends on the forwarding node's sink distance $U_{i-1}$; thus, the sink distance of the source node clearly affects the first hop and subsequent hops. As noted in the previous section, comparing the hop distributions demonstrates that each $C_{i}$ is stochastically dominated by $C_{i+1}$. That is, for $i\geq0$, we have the stochastic ordering
\begin{equation}\label{algineq}
\mathbb{P}(C_{i+1}>c)\geq \mathbb{P}(C_{i}>c) , \qquad c\in(0,r).
\end{equation}
This inequality is the opposite to the equivalent result under the homogeneous model as noted by Zorzi and Rao \cite{ZORZI1:2003} and Keeler and Taylor \cite{KEELER1}. Consequently, this stochastic ordering of $C_i$ is dependent on the choice of $q(u)$, and does not hold in general. However, if given a decreasing shaping function such that $q(u)\leq1/u$ for all $u$, then the inequality 
\begin{equation}\label{psiIneq2}
uq(u)\psi_{\gamma_1}(u) \leq uq(u)\psi_{\gamma_2}(u), \qquad \gamma_1\geq \gamma_2 \geq r,
\end{equation}
holds, and the kernel in the mean measure integral (\ref{Lambda0}) decreases as $\gamma$ increases, and thus, under these conditions the stochastic ordering (\ref{algineq}) holds.

\subsection{Path dependence}\label{pathDep}
Let the random variable $\Theta_i$ be the angle between the $i$-th node and the previous node in relation to the sink. We assign the point $X_i=(U_i,\Theta_i)$ to the $i$-th forwarding node. The source (or zeroth) node corresponds to the point $X_{0}=(\ell,0)$. A message travels $i$ hops along a path that corresponds to a sequence of random points $\vec{X}_i=(X_0,X_1,\dots,X_i)$.

Let $\I_i(u_{i+1})\subset\mathbb{R}^2$ be the feasible region of the $i$-th forwarding node as a function of $u_{i+1}$ under the independent model. After the first hop, the nature of greedy routing implies that another dependence arises in the distribution of $U_2$, which was observed under the homogeneous model \cite{KEELER1,KEELER2}. If a forwarding node is chosen, then there are no other nodes in the source feasible region closer to the sink. Hence, the intersection of the feasible regions of the source and the first node (that is, $\I_1\cap\I_{0}$ in Fig. \ref{stochdep}) has no awake nodes. This implies that $U_2$ is  dependent on both $U_1$ and $\Theta_1$, the angle between the first node and the source node in relation to the sink. We call this dependence in both the sink distance and the sink angle the \emph{path} dependence, and the hop model that includes both the path and the sink dependence simply the \emph{dependent} model. Conversely, the \emph{independent} model only includes the sink dependence.

\begin{figure}
\begin{center}
\begin{overpic}[scale=.83]{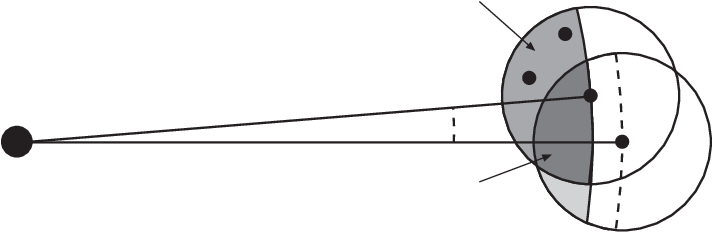}
\put(56,13.8){$\Theta_1$}
\put(52,18){$U_1$}
\put(87.5,14.5){$X_{0}$}
\put(82.6,21.4){$X_{1}$}
\put(55,32){$\I_1\setminus\I_{0}$}
\put(55,6.1){$\I_1\cap\I_{0}$}
\end{overpic}
\caption{No awake nodes in the intersection region $\I_1\setminus\I_{0}$ during the first message relay.\label{stochdep}}
\end{center}
\end{figure}

In the dependent model with no sleep scheme, the randomness is all encapsulated in the inhomogeneous two-dimensional Poisson process that gives the node locations, and everything else is deterministically given. Conversely, the independent model has a different source of randomness. The two-dimensional Poisson process is re-sampled at each time step.

Thus we have to extend underlying sample space to the set $\Omega^\infty$ of sequences of realizations of two-dimensional point processes, together with the probability measure induced by the assumption that the realizations are independent and compatible with the Poisson intensity measure (\ref{Lambda1}). The random variables $U_i$ are now defined to give the sink distance of the forwarding node after $i$ hops where, at each hop, we select the next node in the same manner as that described above for the dependent model, but according to the $i$-th realization of the spatial Poisson process in the sequence.

The assumption that there are independent realizations of the underlying node distribution at each hop may be thought to be unusual, but it has been implicitly assumed before \cite{ZORZI1:2003}, and it does lead to tractable approximations and bounds. Furthermore, it becomes a good model if there is a sleep scheme in operation in which nodes are alternately available and unavailable to act as transits.

Under a sleep scheme, if nodes were asleep during the previous message relay, it is possible for recently awoken nodes to be present in this region during the current message relay. To perform the initial analysis we assume no sleep scheme exists (by setting $p=1$). After our analysis, we include a simple sleep scheme and examine how varying $p$ and $\alpha$ (while fixing $\lambda$) affects the path dependence.

We denote the probability measures derived under the dependent and independent models respectively by $\Prob_D$ and $\Prob_I$ (the subscript is dropped if a result applies to both models). The distribution of $U_{i+1}$ under the independent model is dependent only on the sink distance of the current forwarding node, hence we write
\begin{equation}\label{Fi}
F_{i}(u_{i+1})=\mathbb{P}_I(U_{i+1}\leq
u_{i+1}|U_i=u_i),
\end{equation}
while under the dependent model the distribution is dependent on the message path, and so we write
\begin{equation}\label{Gi}
G_{i}(u_{i+1})=\mathbb{P}_D(U_{i+1}\leq
u_{i+1}|\vec{X}_i=\vec{x}_i).
\end{equation}
We denote the rescaled mean measure of the feasible region under the independent and dependent models respectively as $Q_{i}(u_{i+1})$ and $\vec{Q}_{i}(u_{i+1})$.

The rescaled mean measure under the independent model is always given by the original equation (\ref{meanU}), and hence, the distribution and  probability density of $U_{i+1}$ are obtained by setting $\gamma=u_i$ in equations (\ref{inhomFu}) and (\ref{inhomfu}). Under the dependent model, if the rescaled mean measure is given after $i$ hops, the sink distribution
\begin{equation}\label{Gu}
G_{i}(u_{i+1}) = \left\{ \begin{array}{ll}
1-e^{-\lambda \vec{Q}_i(u_{i+1})} &\quad u_{i}-r \leq u <u_{i}\\
1 &\quad u\geq u_{i}\\
0&\quad u<u_{i}-r,
\end{array} \right.
\end{equation}
immediately follows, and where it is absolutely continuous its  probability density
\begin{equation}\label{gu}
g_{i}(u_{i+1}) =
  \lambda \vec{Q}'_i(u_{i+1})e^{-\lambda \vec{Q}_i(u_{i+1})},
\end{equation}
also follows.

The set representing the feasible region under the dependent model follows by excluding the intersections of previous feasible regions, namely
\begin{equation}
\D_i(u_{i+1})=
\I_i(u_{i+1})\setminus\displaystyle\cup_{j=0}^{j=i-1}\I_j(u_{j+1}).
\end{equation}
Under the dependent model, it is possible to calculate the rescaled mean over the feasible region after one hop; see appendix for details. If $i\geq2$, we approximate the feasible region under the dependent model
\begin{equation}\label{DepSetApprox}
\D_i(u_{i+1})\approx
\I_i(u_{i+1})\setminus\displaystyle\I_{i-1}(u_{j+1}),
\end{equation}
where we refer to this approximation as the one-hop model. Results under the homogeneous model revealed that this approximation sufficiently captures the path dependence \cite{KEELER2}. Consequently, only the location of the previous node is needed to calculate the rescaled mean under the dependent model, and approximation (\ref{DepSetApprox}) is used henceforth.

The path dependence depends on the precise locations of the previous nodes while the sink dependence which only depends on their distance from the sink. To capture this observation and use it to describe the random behaviour of message hops we need the joint density of $U_i$ and $\Theta_{i}$. Under our inhomogeneous Poisson model, the joint probability density is
\begin{equation}\label{gjoint}
g_{i}(u_{i+1},\theta_{i+1})= \lambda_{\D_i} (u_{i+1},\theta_{i+1}) e^{-\lambda \vec{Q}_i(u_{i+1})},
\end{equation}
where the derivation is analogous to that of the homogeneous model \cite{KEELER2}; see appendix for details. The spatially dependent initial density $\lambda_{\D_i}(u_{i+1},\theta_{i+1})=\lambda\mathbf{I}_{\D_i}(u_{i+1},\theta_{i+1})$ and the indicator function of the dependent feasible region
\begin{equation}\label{indD}
\mathbf{I}_{\D_i}(u_{i+1},\theta_{i+1})= \left\{ \begin{array}{ll}
1, &\qquad  (u_{i+1},\theta_{0i})\in \D_i,\\
0,&\qquad \textrm{otherwise},
\end{array} \right.
\end{equation}
where angular coordinate $\theta_{0i}$ is the angle between the source node and the $i$-th forwarding node in relation to the sink. These expressions serve as the basis of the node density and the joint probability density under the sleep model in the next section.

\section{Sleep model}\label{sleep5}
We outline a simple sleep scheme that has been analyzed under the homogeneous model \cite{KEELER1}. We assume that the probability that a node is awake on each hop is $p$ and the event that a node is awake during a transmission attempt is independent of the event that it is awake at other transmission attempts. Consequently, the intersection region ($\I_1\cap\I_{0}$ in Fig. \ref{stochdep}) has a thinned initial node density  $(1-p)\lambda$. The rest of the feasible region $\I_1\setminus\I_{0}$ has an initial density $\lambda$. It follows that the initial node density function after one message hop is given by
\begin{equation}\label{sleeplambda}
\lambda_{\D_1}(u_{2},\theta_2)= \lambda\left[ \Ind_{\I_1\setminus\I_{0}}(u_{2},\theta_2)
+(1-p)\Ind_{\I_1\cap\I_{0}}(u_{2},\theta_2)
\right],
\end{equation}
where the superscripts denote the indicator functions of the disjoint regions.

In the limit as $p$ approaches zero and $\alpha$ approaches infinity with $\lambda=p\alpha$ held constant, the locations of the nodes after each hop is re-sampled, thus, completely removing the path dependence and allowing each forwarding node to be treated like a source node. Its effects on the node density have been explored more thoroughly under the homogeneous model \cite{KEELER2}.

To calculate the Poisson mean measure over the region $\D_i$, the above node density function is integrated over the domain \cite{SKM:1995} leading to
\begin{equation}
\vec{\Lambda}_{\D_i}(u_{i+1})=\int_{u_{i}-r}^{u_{i+1}}\int_{-\psi_{u_i}(w_{i+1})}^{\psi_{u_i}(w_{i+1})} \lambda_{\D_i}(w_{i+1},\theta_{i+1})q (w_{i+1})w_{i+1}d\theta_{i+1} dw_{i+1},
\end{equation}
and we define
\begin{equation}
\vec{Q}_{\D_i}(u_{i+1})=\frac{\vec{\Lambda}_{\D_i}(u_{i+1})}{\lambda}.
\end{equation}
The joint probability density of $U_i$ and $\Theta_{i}$, that is
\begin{equation}\label{jointsleep5}
g_{i}(u_{i+1},\theta_{i+1})= \lambda_{\D_i}(u_{i+1},\theta_{i+1}) e^{-\lambda\vec{Q}_{\D_i}(u_{i+1})}.
\end{equation}

\section{Multihop distribution}
\subsection{Hop advancements}
We initially formulated this problem with the sink distance variable since greedy routing is naturally based on it. However, hop advancement is a more intuitive variable in describing message progress over a multihop path. We adopt similar notation used for the sink distance random variables, hence $\Fb$ and $\Gb$ denote the distributions under the two models. The complement of the sink distribution yields the hop distribution under both the independent and dependent models; the latter being
\begin{equation}\label{Gc}
\Gb_{i}(c_{i+1})=\left\{ \begin{array}{ll}
e^{-\lambda \vec{Q}_{i}(u_i-c_{i+1})} \quad& 0 < c_{i+1} \leq r\\
1 \quad& c_{i+1} > r\\
0 \quad& c_{i+1} \leq 0.
\end{array} \right.
\end{equation}
and its probability density defined on the absolutely continuous part of the support
\begin{equation}\label{gc}
\gb_{i}(c_{i+1}) =
  \lambda \vec{Q}'_i(u_i-c_{i+1})e^{-\lambda \vec{Q}_i(u_i-c_{i+1})},
\end{equation}
where a simple sum relates the hop and sink distance variables
\begin{equation}\label{usum}
u_i=\ell-\sum^{i}_{j=1}c_j.
\end{equation}

\subsection{Distribution of $Z_n$}
Let the random variable $Z_n$ represent the distance advanced by a message in $n$ hops
\begin{equation}\label{Zprimen5}
Z_n=\sum^n_{i=1}C_i.
\end{equation}
To calculate the distribution of $Z_n$ we use the joint probability density of the random variables $C_1$ to $C_{n}$ and $\Theta_1$ to $\Theta_{n}$, that is
\begin{equation}\label{jointg5}
\gb_{(n-1)}(c_1,.,c_{n},\theta_1,.,\theta_{n})=\prod_{i=1}^n \lambda_{\D_{i-1}}(u_{i-1}-c_{i},\theta_{i})e^{-\lambda \vec{Q}_{i-1}(u_{i-1}-c_{i})},
\end{equation}
which is defined for $c_i\in(0,r]$. The derivation of the joint probability density is similar to that under the homogeneous mode \cite{KEELER2}, which we have adapted and included in the appendix for completeness.

It follows that the distribution of message advancement after $n$ hops is expressed by
\begin{align}\label{ProbZn5}
\mathbb{P}_D(Z_{n}\leq z)=&\int_{0^+}^{\min(z,r)}dc_1\int_{-\psi_{0}(c_{1})}^{\psi_{0}(c_{1})}d\theta_1
\int_{0^+}^{\min(z-c_1,r)}dc_2\int_{-\psi_{1}(c_{2})}^{\psi_{1}(c_{2})}\dots\\
&\int_{0^+}^{\min(z-\sum_{i=1}^{n-1}c_i,r)}dc_n\int_{-\psi_{n-1}(c_{n})}^{\psi_{n-1}(c_{n})}
\gb_{(n-1)}(c_1,.,c_{n},\theta_1,.,\theta_{n}) d\theta_n\\
&+\Prob_D(C_1=0)+ \Prob_D((Z_1\leq z)\cap (C_2=0))+\dots\\ &+\Prob_D((Z_{n-1}\leq z)\cap (C_{n}=0)).
\end{align}
where
\begin{equation}\label{shortpsi5}
\psi_i(c_{i+1})=\psi_{u_i}(u_i-c_{i+1}),
\end{equation}
denotes the maximum angle for a sink distance given by the sink angle function (\ref{psi5}). The distribution of $Z_n$ under a sleep scheme is obtained by substituting the product of the joint probability densities (\ref{jointsleep5}).

The integral explicitly shown in the expression of $\mathbb{P}_D(Z_{n}\leq z)$ is the distribution of $Z_n$ conditioned on the event that all hops $C_i$ advance some positive distance. We will refer to this integral simply as the \emph{conditional distribution} of $Z_n$, and denote it by
\begin{equation}\label{ProbZnPos}
\mathbb{P}(Z_{n}\leq z|+)=\mathbb{P}(Z_{n}\leq z|C_1>0,\dots,C_n>0).
 \end{equation}

Under the independent model the joint probability density is not a function of any of the variables $\theta_1$ to $\theta_n$.  Hence, the equivalent integral can be analytically integrated over the sink angle domains \cite{KEELER1}, thus giving a simplified expression in the form of hop probability densities
\begin{align}\label{ProbZnInd5}
\mathbb{P}_I(Z_{n}\leq z)=&\int_{0^+}^{\min(z,r)}dc_1
\int_{0^+}^{\min(z-c_1,r)}dc_2\dots\\
&\int_{0^+}^{\min(z-\sum_{i=1}^{n-1}c_i,r)}\bar{f}_{0}(c_1)\dots \fb_{{n-1}}(c_n)dc_n\\
&+\Prob_I(C_1=0)+ \Prob_I((Z_1\leq z)\cap (C_2=0))+\dots\\ &+\Prob_I((Z_{n-1}\leq z)\cap (C_{n}=0)).
\end{align}

Under the dependent model, the probability of a message reaching a routing void after advancing $i$ hops
\begin{equation}
\mathbb{P}_D(C_{i+1}=0|\vec{X}_i=\vec{x}_i)=e^{-\lambda \vec{Q}_{i}(u_i)},
\end{equation}
follows. The routing void probability leads to the distribution of $Z_n$ conditioned on the event that the message meets a routing void on the last hop
\begin{align}\label{ProbZnNull5}
\nonumber\mathbb{P}_D((Z_{n}\leq z) \cap (C_{n+1}=0))&=\int_{0^+}^{\min(z,r)}dc_1\int_{-\psi_{0}(c_{1})}^{\psi_{0}(c_{1})}d\theta_1
\cdots\\
&\nonumber\int_{0^+}^{\min(z-\sum_{i=1}^{n-1}c_i,r)}dc_n\\
&\int_{-\psi_{n-1}(c_{n})}^{\psi_{n-1}(c_{n})}\gb_{(n-1)}(c_1,.,c_{n},\theta_1,.,\theta_{n})\mathbb{P}_D(C_{n+1}=0) d\theta_n.
\end{align}

Consider the event when a message does not advance, hence $X_i=X_{i+1}$. If there is a sleep scheme, a forwarding node can benefit by making multiple relay attempts. The number of different possible events soon results in the integral expressions needed to describe such a model  growing to be intractable. We restrict the integrals by assuming that a message executes only one relay attempt. 

On this note, we point out that under the homogeneous model \cite{KEELER2}, stochastic `rules of thumb' have been proposed for how many re-attempts forwarding nodes should make before considering other options (such as message backtracking). Under the inhomogeneous model, the equivalent results can be obtained by appropriately replacing the area terms with the corresponding integrals from the rescaled mean measures.

\subsection{Number of hops}
Let the random variable $N$ represent the total number of hops required for a message to reach the sink. The distribution of $N$ gives a different perspective of the performance of a routing method in a sensor network. The random variables $N$ and $Z_n$ are connected by a simple result \cite{KEELER1}, which in short says that for all $n \geq1$, the relation $\Prob_D(N\leq n)=1-\Prob_D(Z_{n-1}<\ell-r)$ holds. This results has been used to calculate the distribution of $N$ from the distribution of $Z_n$ in the homogeneous setting \cite{KEELER2}. We use this relation to calculate the equivalent results under our spatially dependent node density model (see results in Fig. \ref{Pn2a}, \ref{Pn2b} and \ref{Pn3a}). Moreover, we note that the limiting value of $\Prob_D(N\leq n)$ as $n \to \infty$ is the probability that the message ever reaches the sink, which is an important performance measure of the system, and can be derived only when all the dependence is incorporated into the model.


\section{Integration methods}
To calculate the distributions of $Z_n$ under the dependent model, a $2n$-fold integral (\ref{ProbZn5}) needs to be evaluated. For low $n$, traditional numerical integration schemes can be used. Unfortunately, integration by these methods is too slow at higher hop numbers, which motivates us to employ quasi-Monte Carlo methods.

\subsection{Quasi-Monte Carlo}
The integration description that follows is similar to the more detailed account \cite[Chap. 4]{KEELERTHESIS} where quasi-Monte Carlo methods were used to evaluate similar integrals under the homogeneous model. In recent years, these integration methods have gained much interest owing to their speed and accuracy in evaluating high dimensional integrals.

Quasi-Monte Carlo methods are based on purely deterministic sequences of quasi-random numbers such as Halton \cite{HALTON:1960} and Sobol \cite{SOBOL:1967} sequences. Mathematically, quasi-random sequences have low-discrepancy \cite{NIEDERREITER:1992,SM:1994}. Informally, such sequences appear `less random' than sequences produced by regular pseudo-random number generators as they occur more evenly spaced apart.

Previous numerical work \cite{KEELER2} has led us to use leaped Halton sequences to calculate integrals. Often integrals were also calculated via regular Monte Carlo methods to check the quasi-random approach. We also give some results based on so-called \emph{lattice rules}, which lead to specific cases of quasi-random sequences. These rules can produce well-behaving quasi-random sequences and have been a research focus in recent years owing to their ability to counter the curse dimensionality \cite{KS:2005}. The points arising from lattice rules are used in a similar manner to the quasi-Monte Carlo approach. There are many suggestions for lattice rules, but we lightly examine only one known as \emph{rank-1 lattice rule}, which over the unit hyper-cube gives the integral estimate
\begin{equation}\label{LR}
\hat{I}_nf=\frac{1}{n}\sum_{k=1}^n f(\left\{k\frac{\mathbf{z}}{n}\right\}),
\end{equation}
where the \emph{generating vector} $\mathbf{z}\in \mathbb{Z}^s$, $n$ is the number of function samples, and the braces give the fractional part in $[0,1)$.

Given a generating vector, quasi-random sequences based on such a lattice rule can be clearly produced in an exceedingly fast manner. The way to quickly evaluate an integral is by choosing a suitable generating vector. However, the drawback is that lattice rules are based on input parameters known as weights, which depend on the nature of the function. In particular, the weights depend on how the function varies with respect to all its variables. Furthermore, the choice of some lattice rules require the total number of function samples before the integral calculation starts. This differs from regular quasi-Monte Carlo methods in which the integral estimate can be calculated continually until a sufficient number of function samples has been taken.

Under lattice rules, the number of function samples also influences the choice of the generating vector. Consequently, using the most suitable generating vector may not be a simple task as it involves analyzing the function of interest. However, a thorough analysis of which weights and quasi-random sequences are the most suitable in this setting is beyond the scope of this work. We simply give some complementary results based on the rank-1 lattice rule, and leave the analysis as a future task.

The research field of lattice rules has a relatively short history, and new lattice rules are being developed continually. For more information, we refer the reader to the introductory piece by Kuo and Soan \cite{KS:2005}, and an example of lattice rules used in a financial setting \cite{GKSW:2008}. Suitable lattice rules may offer a substantially faster way of evaluating the high dimensional integrals presented here and in previous work \cite{KEELER2}. However, we focus on reducing error regardless of the chosen sequences by employing importance sampling, which has been done under the homogeneous model \cite{KEELER2}.

\subsection{Importance sampling}
To reduce the variance of the integral estimate we employ importance sampling; that is, suitably generate $C_i$ values to sample the function in key regions. Based on previous work \cite{KEELER2}, we derive an importance sampling function which is similar in form to that of the homogeneous case. We recall the expansion of the $Q$ function (\ref{approxQ}) in which we use only the first term, hence
\begin{equation}
\begin{array}{c}
  Q_{\gamma}(u)\sim q_0(u-\gamma+r)^{3/2}+O(u-\gamma+r)^{5/2}
\\
\quad\textrm{as }\quad u \rightarrow \gamma-r,
\end{array}
\end{equation}
where
\[
q_0 = \dfrac{4}{3}\left[\dfrac{2r}{\gamma (\gamma-r)}\right]^{1/2}.
\]
A change of variable $c=\gamma-u$ leads to the function
\[
\tilde{Q}_{\gamma}(c)=q_0(r-c)^{3/2},
\]
which leads to an approximate solution for the hop distribution
\[
\tilde{F}_{\gamma}(c)= e^{-\lambda \tilde{Q}_{\gamma}(c)}.
\]
The quantity $\tilde{F}_{\gamma}(0)$ is subtracted from the above expression and the result is divided by $\Delta\tilde{F}_{\gamma}=\tilde{F}_{\gamma}(c_{\max})-\tilde{F}_{\gamma}(0)$, to obtain an importance sampling function
\begin{equation}\label{ImpSampFc5}
\widehat{F}_{\gamma}(c)= \frac{1}{\Delta\tilde{F}_{\gamma}}\left[e^{-\lambda \tilde{Q}_{\gamma}(c)}-\tilde{F}_{\gamma}(0)\right],
\qquad 0\leq c \leq c_{\max}.
\end{equation}
where $c_{\max}$ is the largest hop value. The subtracting of the routing void term  has negligible effect for sufficiently large $\lambda$. The corresponding derivative needs to integrate to one, hence the rescaling step. The derivative
\begin{equation}\label{ImpSampFc5b}
\widehat{f}_{\gamma}(c)=\frac{3\lambda }{2\Delta\tilde{F}_{\gamma}}(r-c)^{1/2} e^{-\lambda \tilde{Q}_{\gamma}(c)},
\end{equation}
exists on the same domain as the sampling function. The inverse of the sampling function is given by
\begin{equation}\label{ImpSampFc5c}
\widehat{F}_{\gamma}^{-1}(t)=r-\left( \frac{-1}{\lambda a_1} \ln\left[ t\Delta\tilde{F}_{\gamma}+\tilde{F}_{\gamma}(0)\right] \right)^{3/2},
\qquad 0\leq t \leq r.
\end{equation}
Substituting a random variable from a uniform distribution, say $T\sim U(0,r)$, into the inverse of the importance sampling function
gives a random variable $C$ adhering to the importance sampling distribution (\ref{ImpSampFc5}).

After each hop is generated, the functions (\ref{ImpSampFc5b}) and (\ref{ImpSampFc5c}) should be updated by setting $\gamma$ to the current sink distance for each sample. This step was not necessary under the homogeneous model. However, under this inhomogeneous model each hop distribution varies more with respect to the sink distance. Furthermore, for high $\lambda$, the importance sampling step should improve as it is based on the independent model, and at high node density the intersection regions grow stochastically smaller \cite{KEELER1,KEELER2}.

\section{Simulation}
We compare results from routing simulations to those from our stochastic model and calculations to demonstrate that the one-hop approximation (\ref{DepSetApprox}) sufficiently captures the dependence. Given a source node sink distance $\ell$, message relaying is simulated in a circular sensor field $\mathcal{C}_{\ell}\subset \mathbb{R}^2$ of radius $\ell$ with the sink located at the origin. The fact that messages only advance towards the sink under greedy routings implies that sensor field edges do not influence the message routing. The total number of nodes per simulation is a Poisson random variable with the parameter
\begin{align*}
\Lambda(\mathcal{C}_{\ell})=\,&\lambda\int_0^{\ell}\int_{0}^{2\pi}q(u)udud\theta\\ =\,& 2\lambda\pi\ell.
\end{align*}
In simulation, a node is assigned two independent random variables $\Theta_{\textrm{S}}$ and $R_{\textrm{S}}$ corresponding to their polar coordinates in relation to the sink. To simulate node deployment such that the nodes adhere to the spatially dependent node density (\ref{inhomlambda}), both random variables are uniformly distributed
\begin{align*}
\Prob(\Theta_{\textrm{S}}\leq \theta)=\,&\dfrac{\theta}{\pi}, \qquad \theta \in[0,\pi],\\
\Prob(R_{\textrm{S}}\leq r)=\,&\dfrac{r}{\ell}, \qquad r\in[0,\ell].
\end{align*}

\section{Numerical results}
All the numerical integration and routing simulations were performed in Matlab on a standard machine. The built-in Halton sequence generator was employed for the quasi-Monte Carlo integration. For a given value of $z$, it took between $10^2$ to $10^4$ points to obtain a quasi-Monte Carlo estimate of the conditional distribution $\Prob(Z_n\leq z|+)$ where $n$ ranged from $2$ to $20$. The actual number of points depends on the number of hops $n$; more hops require more points. The exact relationship between the required number of points and $n$ is not known as the number of points also depends on $\lambda$, but future analytic empirical work may shed light on the relationship. The importance sampling step improved the rate of convergence, particularly for high $\lambda$. All calculations took no longer than an hour to complete, and usually considerably less.

We compared the dependent model to routing simulations of various ensemble sizes with and without a blinking sleep scheme. Generally, between $10^3$ to $10^5$ routing simulations were required. Similar to the integration process, higher hop numbers required more simulations to give converged results. The routing simulations are based on the same assumptions made in the mathematical model.

We observed under the inhomogeneous model that more function samples and routing simulations are needed to give similarly converged results compared to those obtained under the homogeneous model. An extensive empirical investigation is needed to see how fast quasi-Monte Carlo methods are compared to routing simulations. Also, more empirical and theoretical evidence is needed to elucidate the advantages and disadvantages of calculating probabilistic behaviour of greedy routing via our model.

\begin{figure}
\begin{center}
\includegraphics[scale=0.6]{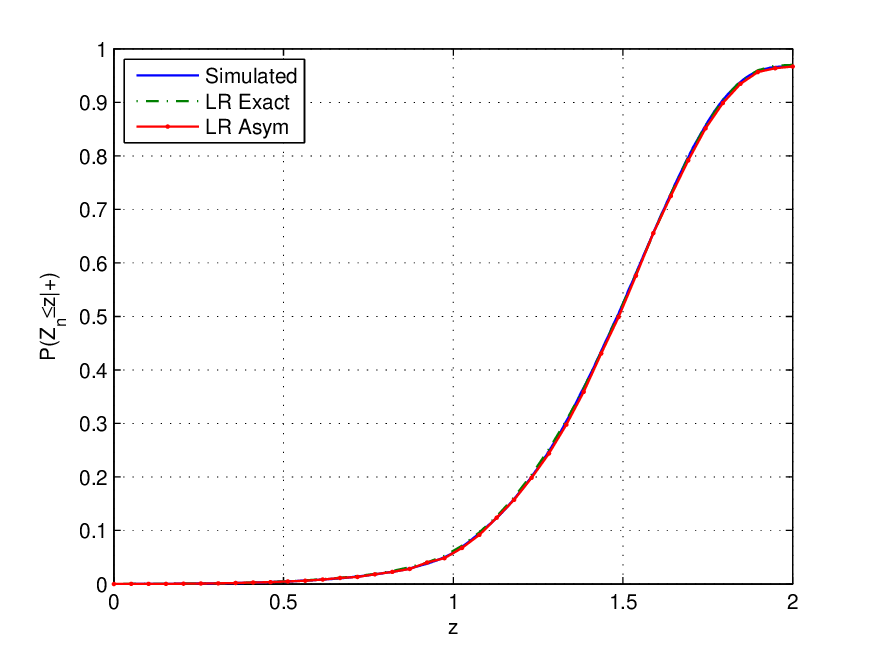}
\caption{Results of $\Prob(Z_2\leq z|+)$ via lattice rules integration (based on analytic and asymptotic expressions) and simulations ($p=1$,$\lambda=3\ell$, $r=1$ and  $\ell=10$).\label{Comp2HopLRExactAsymVsEmpZ}}
\end{center}
\end{figure}


The lattice rule approach was only used to calculate one set of results (see Fig. \ref{Comp2HopLRExactAsymVsEmpZ}). We used generator vectors based on fixed lattice rules for $2^{10}$ function sample points and equal weights \cite{KuoWeb}. We applied ten random shifts to the lattice rules; see Giles et al. \cite{GKSW:2008} for an example. This approach generally performed well, however, under importance sampling sometimes erratic results arose. This is possibly due to an over bias in function sampling or an unknown numerical artefact. A more thorough examination is needed, but we believe that the preliminary results using lattice rules are promising.

We found that the results based on elliptic integral functions could be replaced with results that used the three-term approximations (\ref{approxpsi5}) and (\ref{approxQ}) with no discernible loss of accuracy (see the plots in Fig. \ref{Comp2HopLRExactAsymVsEmpZ}). Evaluating the approximations is faster as the expressions only involve elementary functions. Consequently, the remaining results are based on these approximations (Fig. \ref{Pn2a} to Fig. \ref{Pn3a}).

\begin{figure}
\begin{center}
\includegraphics[scale=0.6]{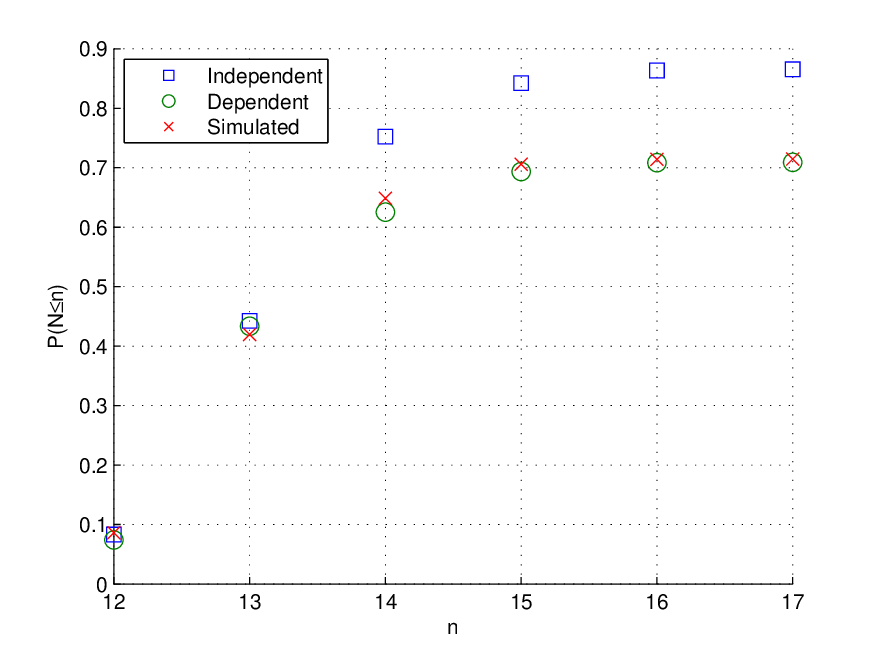}
\caption{Independent and dependent model results of $\Prob(N\leq n)$ compared to simulations ($p=1$, $\lambda=2\ell$, $r=1$ and  $\ell=10$).\label{Pn2a}}
\end{center}
\end{figure}

\begin{figure}
\begin{center}
\includegraphics[scale=0.6]{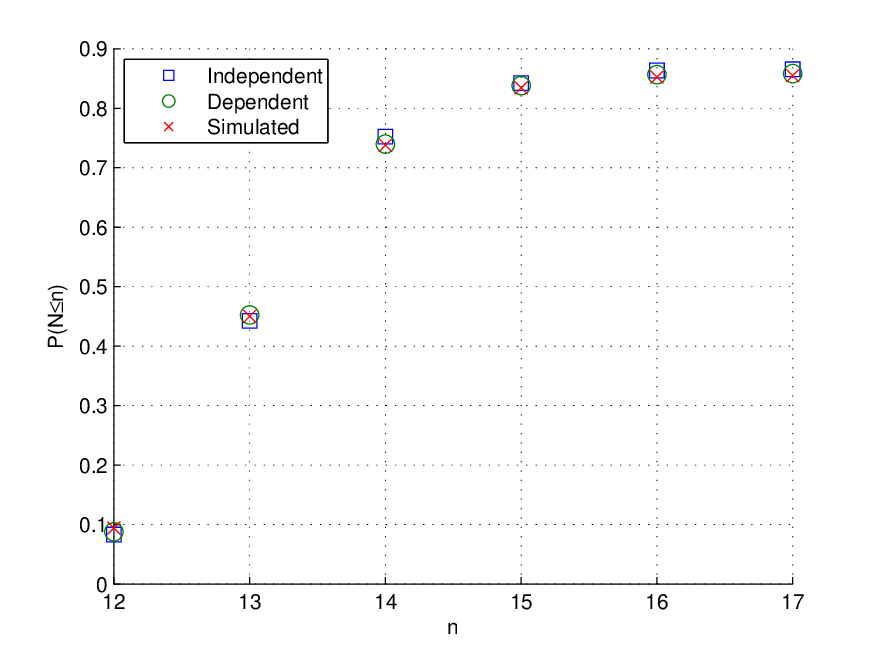}
\caption{Independent and dependent model results of $\Prob(N\leq n)$ compared to simulations ($p=0.1$, $\lambda=2\ell$, $r=1$ and  $\ell=10$).\label{Pn2b}}
\end{center}
\end{figure}

\begin{figure}
\begin{center}
\includegraphics[scale=0.6]{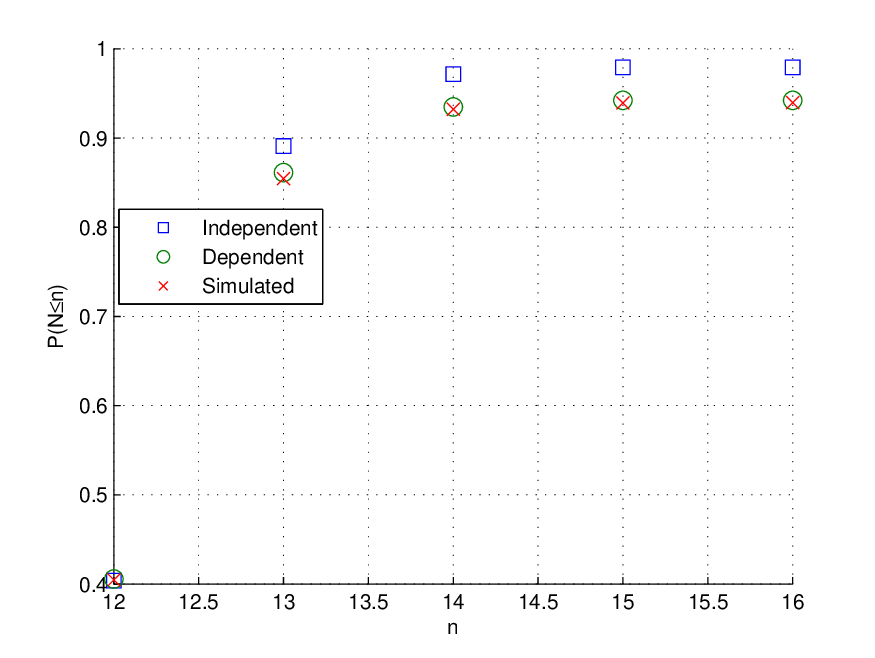}
\caption{Independent and dependent model results of $\Prob(N\leq n)$ compared to simulations ($p=1$, $\lambda=3\ell$, $r=1$ and  $\ell=10$).\label{Pn3a}}
\end{center}
\end{figure}

We calculated the distribution of $N$, and included a simple sleep scheme to see its influence on the path dependence. We compared the independent model to the dependent model by varying $p$ (and accordingly $\alpha$, hence holding $\lambda$ constant). The difference between the two models has an accumulative effect on $N$, hence it serves as a good indicator of path dependence.

Under the independent model, the distribution $\Prob(N\leq n)$ gives greater values for each $n$ than the equivalent result under the dependent model. Also, the path dependence clearly lessens as $p$ approaches zero (compare Fig. \ref{Pn2a} and Fig. \ref{Pn2b}). These results are analogous to those under the homogeneous density model \cite{KEELER2}.

For large $\lambda$, the difference between the two models is less. As is the case for the homogeneous, a larger density results in the next forwarding node being closer to the sink, thus reducing the intersection of the feasible regions and lessening the path dependence. Moreover, under the inhomogeneous model hops grows stochastically larger due to the increasing node density. Thus, we believe that the path dependence continues to decrease stochastically as the message approaches the sink.

In conclusion, the results reveal that the dependent model clearly captures the path dependence. The resulting expressions, both involving the elliptic integrals and the asymptotic expansions, give results that closely agree with simulations. We believe more numerical investigation is needed to choose the most appropriate quasi-random sequences in this setting.


\section{Future work}
In realistic settings, the constant node density assumption may often not be appropriate. Under the spatially dependent model, the density function was chosen such that it was simple enough for obtaining analytic and asymptotic means, while still being a plausible node placement scenario. Other suggestions exist such as the node density decaying exponentially or according to some inverse power of the sink distance.

Furthermore, under our model the sink was located at the maximum of the node density. Placing the sink at an arbitrary point in the sensor field results in the node density being dependent on the sink angle. This is an additional increase in the complexity of the density function. Moreover, the angle of an individual node would not be a uniformly distributed random variable, thus importance sampling might be needed when integrating over the angle domains. Consequently, these suggestions may result in analytic and asymptotic mean measures that can be used to model more realistic node deployment models.

Further investigation of the asymptotic approximations are needed. The approximation may break down when the radius is large compared to the sink distance. For a constant radius model, the lengths can always be rescaled with respect to the transmission radius. However, this may not be possible for a randomly varying radius model. We stress that including random transmission radii into our model would be an interesting and realistic model extension in itself.

We presented some integration results based on lattice rules. Despite these methods giving mostly agreeable results, further investigation is needed to gauge which lattice rules and quasi-random sequences are the most suitable for the integrals that arise from our model. This investigation would be both analytic and numerical in nature. This work may lead to our mathematical model considerably outperforming regular routing simulations.

Finally, an attractive feature of regular Monte Carlo methods is that the error is obtained by estimating the variance of the integral. Conversely, quasi-Monte Carlo methods lack a practical way of estimating the error, despite them generally have a faster convergence rate. The idea of `randomized' quasi-Monte Carlo methods seeks to combine the advantages of both approaches. Consequently, a future task lies in investigating these hybrid methods in evaluating hop integrals.

\section{Conclusion}
We presented a tractable inhomogeneous spatially dependent density model. Inspired by previous work, we developed and examined a greedy routing model that incorporates both sink and path dependence. Moreover, the spatially dependent density model verified that the formulation of the homogeneous model can be extended to an inhomogeneous case. This model is an alternative means of ascertaining the stochastic characteristics of greedy routing in sensor and ad hoc networks.

We used asymptotic methods to derive accurate approximations for hop length moments and the mean measure for our spatially dependent node density model. We used quasi-Monte Carlo methods and recently developed lattice rules coupled with importance sampling to estimate the resulting high dimensional integrals. For a sufficient number of function samples, all the results agreed admirably with those obtained by routing simulations.

Finally, we included a sleep scheme to demonstrate its effects on the local node density and the path dependence. For systems with a low $p$, the results imply that the independent model can be used, thus reducing computation time in calculating stochastic properties of the system.

\section{Acknowledgements}
The author thanks Peter G. Taylor at the University of Melbourne for valuable discussions and advice. A further thanks to Frances Kuo and Ian Sloan at the University of New South Wales for kindly explaining the subtleties of lattice rules. The author would also like to acknowledge the support of the Australian Research Council Centre of Excellence for Mathematics and Statistics of Complex Systems and the David Lachlan Hay Memorial Fund.

\bibliographystyle{siam}
\bibliography{References}

\appendix

\section{Derivation of $\vec{Q}_i(u_{i+1})$}
We outline how to calculate the rescaled mean measure for the region feasible region under the dependent model. The method is akin to calculating the equivalent area function $\vec{A}_i(u_{i+1})$ in the homogeneous case \cite{KEELER1}. In fact, the derivation of $\vec{Q}_i(u_{i+1})$ is included for completeness, and we refer the reader to previous work \cite{KEELER1} for further details.

We use the function $\Delta\psi(u_2)$ again to describe the angular width of the intersection of the source and current feasible regions. Subsequently, the rescaled mean measure on this intersection region is given by
\[
Q_{1\setminus0}(u_2)= \int^{u_2}_{\ell-r}\Delta\psi(w_2)dw_2,
\]
which leads to the rescaled mean under the dependent model
\[
\vec{Q}_1(u_{2})=Q_{u_1}(u_{2})-Q_{1\setminus 0}(u_{2}).
\]
\begin{figure}[h]
\begin{center}
\begin{overpic}[scale=0.52]{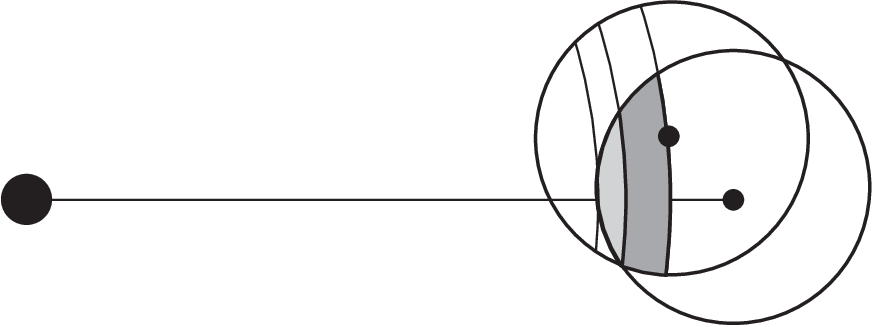}
\put(80,17){$X_{0}$}
\put(77,24){$X_{1}$}
\put(64,3){$X_{01}$}
\end{overpic}
\caption{The form of intersection region depends on the $u_2$
interval and the location of $X_{01}$.\label{u2region}}
\end{center}
\end{figure}

It can be shown that the intersection angle expression is
\[
\Delta\psi(u_2)=2\psi_{\ell}(u_2)\mathbf{I}^-_{X_{01}}, \qquad \ell-r\leq
u_2\leq u_{01},
\]
where $\mathbf{I}^-_{{01}}$ is an indicator function for when the intersection-point $X_{01}$ is below the baseline that runs from $X_0$ to $X_S$ (refer to Fig. \ref{u2region}), and $u_{01}$ is the sink distance of $X_{01}$. On the second interval we obtain the
intersection angle expression
\[
\nonumber \Delta\psi(u_2)=\psi_{\ell}(u_2)+\psi_{u_1}(u_2)-\theta_1,\qquad
u_{01}\leq u_2\leq u_1.
\]

Recall under the independent model the rescaled mean integral
\[
Q_{\gamma}(u)=
2\int_{\gamma-r}^u \psi_{\gamma}(w)d w,
\]
where the angle function
\[
\psi_{\gamma}(u)= \arccos\left(\frac{u^2+\gamma^2-r^2}{2u\gamma}\right).
\]
Thus, on the first interval, $[\ell-r,u_{01}]$, we have the rescaled mean
\begin{align*}
  Q_{1\setminus0}(u_2)=\,&2\int^{u_2}_{\ell-r}\psi_{\ell}(w_2)dw_2\mathbf{I}^-_{X_{01}}\\
  =\,& Q_{\ell}(u_2)\mathbf{I}^-_{X_{01}}.
\end{align*}
On the second interval, $[u_{01},u_1]$, we have the slightly more complicated rescaled mean expression
\begin{align*}
Q_{1\setminus0}(u_2)=\,&\int^{u_2}_{\ell-r}\left[\psi_{\ell}(w_2)+\psi_{u_1}(w_2)-\theta_1\right]dw_2+Q_{\ell}(u_{01})\mathbf{I}^-_{X_{01}},\\
=\,& \frac{1}{2}\left(Q_{\ell}(u_2)+Q_{u_1}(u_2)+2\theta_1\left[u_{01}-u_2\right]\right)\\
&+\frac{1}{2}\left(Q_{\ell}(u_{01})[2\mathbf{I}^-_{X_{01}}-1]-Q_{u_1}(u_{01})\right).
\end{align*}
This approach naturally extends to the rescaled mean on the intersection of any two feasible regions. Consequently, for $i\geq 1$, under the one-hop dependent model the rescaled mean measure on the feasible region is given by
\[
\vec{Q}_i(u_{i+1})=Q_{u_i}(u_{i+1})-Q_{i\setminus {i-1}}(u_{i+1}).
\]


\section{Joint probability density}
We assume there is no sleep scheme, and note that to include a sleep scheme entails substituting the corresponding node density function (\ref{sleeplambda}) into the joint probability density.

We consider the probability density of $\Theta_{i}$ conditioned on the event $U_i=u_i$. Under our inhomogeneous Poisson model, the angle of any node is distributed uniformly around the sink (in the regions where nodes can exist). Hence, the conditional probability density under the dependent model
\[
  g_{u_1}(\theta_1|U_1=u_1)=
  \dfrac{\mathbf{I}_{\D_0}(u_{1},\theta_{1})}{2\psi_{u_0}(u_1)},
\]
follows, and this expression also applies to the independent model as it has the same feasible region. We introduce the function $\Psi_{\vec{x}_i}(u_{i+1})$ to denote the total angular width of the feasible region given $U_{i+1}=u_{i+1}$ and the path $\vec{X}_i=\vec{x}_i$. The conditional probability density
\[
  g_{u_{i+1}}(\theta_{i+1}|U_{i+1}=u_{i+1}) =
  \dfrac{\mathbf{I}_{\D_i}(u_{i+1},\theta_{i+1})}{\Psi_{\vec{x}_i}(u_{i+1})},
\]
follows. Under the independent model the angular width function simplifies to \[\Psi_{\vec{x}_i}(u_{i+1})=2\psi_{u_i}(u_{i+1}).\]
The rescaled mean of the feasible region written as an integral
\[
\vec{Q}_{i}(u_{i+1})= \int^{u_{i+1}}_{u_i-r}\Psi_{\vec{x}_i}(w_{i+1})dw_{i+1},
\]
gives the derivative of the rescaled mean measure
\[
\vec{Q}'_{i}(u_{i+1})= \Psi_{\vec{x}_i}(u_{i+1}).
\]
The probability density
\[
g_{\vec{x}_i}(u_{i+1})=\lambda \Psi_{\vec{x}_i}(u_{i+1})e^{-\lambda \vec{Q}_{i}(u_{i+1})},
\]
follows. Hence, the joint probability density of the two random variables $U_{i+1}$ and $\Theta_{i+1}$ is given by
\begin{align*}
g_{i}(u_{i+1},\theta_{i+1})&=  g_{\vec{x}_{i}}(u_{i+1}) g_{u_{i+1}}(\theta_{i+1}|U_{i+1}=u_{i+1},\vec{X}_{i}=\vec{x}_{i})\\
&=\lambda_{\D_{i}}(u_{i+1},\theta_{i+1})e^{-\lambda \vec{Q}_{i}(u_{i+1})}
\end{align*}
where the spatially dependent node density function has been introduced
\[\lambda_{\D_{i}}(u_{i+1},\theta_{i+1})=\lambda\mathbf{I}_{\D_{i}}(u_{i+1},\theta_{i+1}).\]
The joint probability density of the random variables $U_1$ to $U_{n}$ and $\Theta_1$ to $\Theta_{n}$
\begin{align}
g_{(n-1)}(u_1,.,u_{n},\theta_1,.,\theta_{n})= \prod_{i=1}^n \lambda_{\D_{i-1}}(u_{i},\theta_{i})e^{-\lambda \vec{Q}_{i-1}(u_{i})},
\end{align}
follows, or in terms of hop advancements the equivalent expression
\[
\gb_{(n-1)}(c_1,.,c_{n},\theta_1,.,\theta_{n})=\prod_{i=1}^n \lambda_{\D_{i-1}}(u_{i-1}-c_{i},\theta_{i})e^{-\lambda \vec{Q}_{i-1}(u_{i-1}-c_{i})}.
\]

\end{document}